\theoremstyle{plain}
\newtheorem{theorem}{Theorem}
\newtheorem{lemma}[theorem]{Lemma}
\theoremstyle{definition}
\theoremstyle{remark}
\title{\bf The 1/3-2/3 Conjecture for $N$-free ordered sets}
\author{Imed Zaguia\\
\small Dept of Mathematics \& Computer Science, Royal Military College of Canada\\
\small P.O.Box 17000, Station Forces, K7K 7B4 Kingston, Ontario CANADA \\[-0.8ex]
\small\tt imed.zaguia@rmc.ca\\
\small Mathematics Subject Classifications: 06A05, 06A06, 06A07}
\begin{document}
\maketitle


\begin{abstract}
A balanced pair in an ordered set
$P=(V,\leq)$ is a pair $(x,y)$ of elements of $V$ such that the
proportion of linear extensions of $P$ that put $x$ before $y$ is
in the real interval $[1/3, 2/3]$. We prove that every finite
$N$-free ordered set which is not totally ordered has a balanced
pair.

  \bigskip\noindent \textbf{Keywords:} Ordered set; Linear extension; $N$-free; Balanced pair; 1/3-2/3
Conjecture.
\end{abstract}

\input{epsf}

\section{Introduction}

Throughout, $P =(V, \leq)$ denotes a \emph{finite ordered set}, that
is, a finite set $V$ and a binary relation $\leq$ on $V$ which is
reflexive, antisymmetric and transitive. A \emph{linear extension} of $P =(V, \leq)$ is a linear ordering $\preceq$ of $V$
which extends $\leq$, i.e. such that $x\preceq y$ whenever $x \leq y$.

Suppose an unknown linear extension
$L$ of $P$ is to be determined using only comparisons between pairs of elements. At each step we ask a question of the form
"is it true that $x\prec y$?". We will get the answer before we can ask another question.
How many comparisons do we need to perform (in the worst case) in
order to determine $L$ completely? This is known as the problem of \emph{comparison sorting}.

Suppose that at each step we can find a pair $(x,y)$ of incomparable elements such that
the proportion of linear extensions of $P$ that put $x$ before $y$, denoted $\mathbb{P}(x\prec y)$, equals $\frac{1}{2}$. Then we need at least
$\log_{2}(e(P))$ comparisons where $e(P)$ denotes the number of linear extensions of $P$. This is not always
possible as shown by the example (i) depicted in Figure \ref{alpha}. Indeed, in that example the only possible values for
$\mathbb{P}(x\prec y)$ are 1/3 or 2/3.

Call a pair $(x,y)$ of elements of $V$ a \emph{balanced pair} in $P=(V,\leq)$
if $1/3\leq \mathbb{P}(x\prec y)\leq 2/3$. The 1/3-2/3 Conjecture states that every finite ordered set which is
not totally ordered has a balanced pair. If true, the example (i)
depicted in Figure \ref{alpha} would show that the result is best
possible. The 1/3-2/3 Conjecture first appeared in a paper of
Kislitsyn~\cite{ki}. It was also formulated independently by Fredman
in about 1975 and again by Linial~\cite{li}.

The 1/3-2/3 Conjecture is known to be true for ordered sets with a nontrivial automorphism \cite{ghp}, for ordered sets of width
two \cite{li}, for semiorders \cite{gb1}, for bipartite ordered sets
\cite{tgf}, for 5-thin posets \cite{gb2}, and for 6-thin posets \cite{pm}. See \cite{gb} for a
survey.

In this paper we prove the 1/3-2/3 Conjecture for $N$-free ordered sets.

\begin{figure}[!h]
\begin{center}
\leavevmode \epsfxsize=3.5in \epsfbox{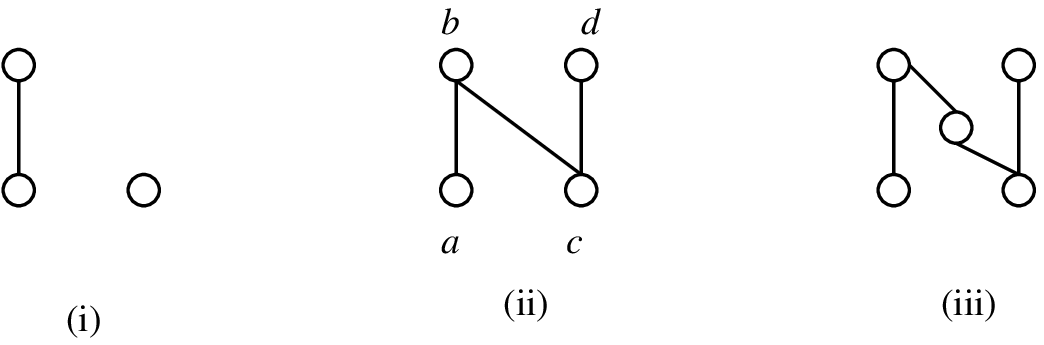}
\end{center}
\caption{} \label{alpha}
\end{figure}


Let $P=(V,\leq)$ be an ordered set. For $x,y\in V$ we say that $y$ is an \emph{upper cover} of $x$ or that $x$ is
\emph{a lower cover} of $y$ if $x<y$ and
there is no element $z\in V$ such that $x<z<y$. Also, we say that
$x$ and $y$ are \emph{comparable} if $x\leq
y$ or $y\leq x$; otherwise we say that $x$ and $y$ are
\emph{incomparable}. A \emph{chain} is a totally ordered set.

A 4-tuple $(a, b, c, d)$ of distinct elements of $V$ is an $N$ in $P$ if $b$ is an upper cover
of $a$ and $c$, $d$ is an upper cover of $c$ and if these are the only comparabilities
between the elements $a,b,c,d$ (See Figure \ref{alpha} (ii)). The ordered set $P$ is $N$-{\it free} if it does  not contain
an $N$ (the ordered set depicted in Figure \ref{alpha} (iii) is $N$-free and the one depicted in Figure \ref{alpha} (ii) is not).

Notice that every finite ordered set can be embedded into a \emph{finite} $N$-free ordered set (see for example \cite{pz}).
It was proved in \cite{blk} that the number of (unlabeled) $N$-free ordered sets is

\[\displaystyle 2^{n\,\log_{2}(n) + o (n\,\log_{2}(n))}.\]

Our main result is this.

\begin{theorem}\label{nfreeinterval} Every finite $N$-free ordered set which is not
totally ordered has a balanced pair.
\end{theorem}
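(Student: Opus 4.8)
The plan is to exploit the structure of $N$-free ordered sets, which have a clean characterization: $P$ is $N$-free if and only if for every two elements $x,y$, the set of lower covers of $x$ and the set of lower covers of $y$ are either disjoint or one contains the other (equivalently, the ``down-neighborhoods'' of the covering relation form a laminar family), and dually for upper covers. I would first try to reduce to a convenient local configuration. Since $P$ is not totally ordered, it has at least one incomparable pair; among all incomparable pairs I would look for one sitting in a structurally rigid position—for instance, two elements $x,y$ that are both minimal, or both maximal, or that share identical sets of lower and upper covers. In an $N$-free poset such ``twin'' or ``autonomous'' pairs tend to arise forcibly, and if $x$ and $y$ have exactly the same relations to every other element (i.e. they form an autonomous $2$-antichain), then by symmetry $\mathbb{P}(x\prec y)=1/2$, which is trivially balanced.

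The key steps I would carry out, in order, are as follows. First, I would establish the laminar-family characterization of $N$-freeness and use it to locate a minimal ``bad'' block. Second, I would reduce to the case where no nontrivial autonomous antichain exists, since such an antichain immediately yields a balanced (indeed $1/2$) pair by the automorphism result \cite{ghp}. Third, in the remaining case I would aim to find two incomparable elements $x,y$ whose relative order is controlled by a small, explicitly countable family of linear extensions. Concretely, I would look for a pair such that $P$ decomposes, after deleting or contracting part of the order, into independent pieces whose linear-extension counts multiply; the probability $\mathbb{P}(x\prec y)$ then factors through a product formula, and I would choose $x,y$ so that the resulting fraction is forced into $[1/3,2/3]$. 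The $N$-free hypothesis is what guarantees these decompositions are clean, because the forbidden $N$ is exactly the obstruction to the covering relation behaving like a forest/series-parallel structure.

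The main obstacle I expect is the case analysis needed to guarantee that \emph{some} incomparable pair lands in $[1/3,2/3]$ rather than taking an extreme value like $\{1/3,2/3\}$ only at the endpoints or, worse, values outside the interval. The danger configurations are precisely small ``local'' incomparabilities—analogous to example (i) in Figure \ref{alpha}—where the only achievable probabilities are $1/3$ and $2/3$; here I must verify that $1/3$ (or $2/3$) is actually attained, so the pair is still balanced by the closed interval. I would handle this by an extremal argument: take an incomparable pair $(x,y)$ minimizing $\mathbb{P}(x\prec y)$ over all incomparable pairs with $\mathbb{P}(x\prec y)\geq 1/2$, and show using the $N$-free structure that this minimum cannot exceed $2/3$. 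The crux is a counting inequality relating $e(P)$, the number of extensions with $x\prec y$, and the number with $y\prec x$; the forbidden $N$ should let me swap a minimal/maximal element past $x,y$ to produce enough extensions on the underrepresented side. Verifying that swap is legitimate—i.e. that it produces a genuine linear extension of an $N$-free poset and that the map is injective with controlled image—will be the technical heart of the argument.
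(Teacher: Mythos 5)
There is a genuine gap: your proposal never produces the configuration that actually drives the proof. The paper's argument (an adaptation of Linial's width-two proof \cite{li}) hinges on a specific structural consequence of $N$-freeness that your plan does not supply. One decomposes $P$ into levels, assumes $|P_0|\geq 2$ (a singleton minimum can be deleted), and chooses the \emph{largest} level index $i$ such that $P_i$ contains two elements $a,b$ with the same set of lower covers. Maximality of $i$, combined with Lemma~\ref{l2} (in an $N$-free poset, cover sets of two elements are \emph{equal or disjoint}), forces $U(b)\cup\{b\}$ to be a chain $b=b_1<\cdots<b_n$ (Lemma~\ref{l3}). One then sets $q_1=\mathbb{P}(a\prec b_1)$, $q_j=\mathbb{P}(b_{j-1}\prec a\prec b_j)$, $q_{n+1}=\mathbb{P}(b_n\prec a)$, proves $q_{n+1}\leq\cdots\leq q_1$ by an injective swap of $a$ with $b_j$ (legitimate precisely because $D(a)=D(b)$ and each $b_j$ has the unique upper cover $b_{j+1}$), and observes that the partial sums of a monotone nonincreasing distribution with $q_1<1/3$ cannot jump over the interval $(1/3,2/3)$ at the step where they cross $1/2$ --- contradiction. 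Your ``extremal argument'' --- minimize $\mathbb{P}(x\prec y)$ over incomparable pairs with value at least $1/2$ and show the minimum cannot exceed $2/3$ --- merely restates the goal locally; you give no mechanism by which $N$-freeness bounds that minimum, and your proposed swap of ``a minimal/maximal element past $x,y$'' has no analogue of the equal-down-sets-plus-chain-above structure that makes the paper's swap well defined and injective.

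Two of your preliminary steps are also flawed. First, your opening characterization of $N$-freeness is incorrect: laminarity (``disjoint or one contains the other'') does not capture it. The correct statement is equal-or-disjoint cover sets, and strict nesting is exactly compatible with an $N$: take minimal elements $a,c$, let $b$ cover both and let $d$ cover only $c$; the lower-cover sets $\{c\}\subset\{a,c\}$ are nested, yet $(a,b,c,d)$ is an $N$. Second, your decomposition step (``$P$ decomposes into independent pieces whose linear-extension counts multiply'') conflates two different notions: posets with no \emph{induced} $N$ as a subposet are the series-parallel posets and do decompose multiplicatively, but $N$-free here means no $N$ in the Hasse diagram --- a much larger class, cf.\ example (iii) of Figure~\ref{alpha} --- and such posets admit no product decomposition in general, so the factoring of $\mathbb{P}(x\prec y)$ you rely on is unavailable. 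Your reduction via autonomous $2$-antichains and \cite{ghp} is sound but disposes only of the trivially symmetric case; the technical heart you correctly anticipate (an injective swap with controlled image) is exactly what is missing, and in the paper it is enabled by the chain structure of Lemma~\ref{l3}, which your outline never establishes.
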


The proof of Theorem \ref{nfreeinterval} is similar to the proof of Theorem 2 of \cite{li} stating that the 1/3-2/3 Conjecture
is true for finite ordered sets of width two (these being the ordered sets covered by two chains).

\section{Proof of Theorem~\ref{nfreeinterval}}

We start this section by stating some useful properties of $N$-free ordered sets.

\begin{lemma}\label{l2} Let $P=(V,\leq)$ be an $N$-free ordered set. If $x, y \in V$ have a common
upper cover, then $x$ and $y$ have the same upper covers. Dually, if
$x, y \in V$ have a common lower cover, then $x$ and $y$ have the
same lower covers.
\end{lemma}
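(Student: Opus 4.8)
The plan is to prove the statement about upper covers; the statement about lower covers then follows by passing to the dual order $P^{*}=(V,\geq)$, which is again $N$-free because the $N$ poset is self-dual. So fix a common upper cover $z$ of $x$ and $y$ and assume $x\neq y$ (otherwise there is nothing to prove). First I would record that $x$ and $y$ are incomparable (written $x\parallel y$): if, say, $x<y$, then $x<y<z$ would contradict $z$ being an upper cover of $x$, and symmetrically for $y<x$.

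The heart of the argument is the following claim: \emph{every upper cover $w$ of $x$ satisfies $y<w$}. To see this, assume $w\neq z$. Since $z$ and $w$ are distinct upper covers of $x$, they are incomparable (an inequality $z<w$ or $w<z$ would violate one of the two cover relations above $x$). I would then suppose toward a contradiction that $y\parallel w$ and examine the $4$-tuple $(y,z,x,w)$: here $z$ is an upper cover of $y$ and of $x$, and $w$ is an upper cover of $x$, while the pairs $\{y,x\}$, $\{y,w\}$, $\{z,w\}$ are all incomparable. These are exactly the comparabilities required of an $N$, contradicting $N$-freeness. Hence $y$ and $w$ are comparable. The possibility $w<y$ is ruled out because it would give $x<w<y$, contradicting $x\parallel y$; therefore $y<w$, which proves the claim.

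Applying the claim with the roles of $x$ and $y$ exchanged---legitimate since $z$ is a common upper cover of both---yields the symmetric statement that every upper cover of $y$ lies above $x$. It then remains to upgrade $y<w$ to the assertion that $w$ is an upper cover of $y$. If $w$ were not an upper cover of $y$, I would choose an upper cover $u$ of $y$ with $u<w$ (obtained by taking any element strictly between $y$ and $w$ and descending to an upper cover of $y$ below it); the symmetric statement gives $x<u$, so that $x<u<w$ contradicts $w$ being an upper cover of $x$. Thus $w$ is an upper cover of $y$, and by the symmetry between $x$ and $y$ the upper covers of $x$ and of $y$ coincide.

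The one point to watch is precisely this final covering step: the forbidden-$N$ test by itself only delivers $y<w$, not that this relation is a cover, so the symmetric form of the claim is essential for ruling out an intermediate upper cover of $y$. Everything else reduces to the single structural observation that the configuration $(y,z,x,w)$ is an $N$ unless $y$ and $w$ are comparable.
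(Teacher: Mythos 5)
Your proof is correct, and it is complete. There is nothing in the paper to compare it against: the paper states Lemma~\ref{l2} without proof, treating it as a known fact about $N$-free ordered sets. Your argument is the standard one, and it matches the technique the paper does display in its proof of Lemma~\ref{l2'}: assume a missing comparability and exhibit a forbidden $N$ --- your tuple $(y,z,x,w)$ plays exactly the role of the paper's $(x_3,x_2,x,x_1)$, with the same bookkeeping (the two upper covers of $x$ are incomparable because each covers $x$, and $w<y$ is excluded since it would force $x<y$). You were also right to isolate the final covering step as the delicate point: the $N$-argument alone only yields $y<w$, and your use of the symmetric claim --- descending from an element strictly between $y$ and $w$ to an upper cover $u$ of $y$ with $x<u<w$, contradicting that $w$ covers $x$ --- correctly closes a gap that a careless proof would leave open. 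The reduction of the lower-cover statement to the dual order is likewise sound, since the $N$ configuration is self-dual and cover relations dualize.
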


Let $P=(V,\leq)$ be an ordered set. An element $m\in V$ is called \emph{minimal} if for
all $x\in V$ comparable to $m$ we have $x\geq m$. We denote by $Min(P)$ the set
of all minimal elements of $P$. We recall that the decomposition of $P$
into \textit{levels} is the sequence $P_{0},\cdots,P_{l},\cdots$
defined by induction by the formula
\begin{equation*}
P_{l}:=Min(P-\cup \{P_{l^{\prime }}:l^{\prime }<l\}).
\end{equation*}
In particular, $P_{0}=Min(P)$.

\begin{lemma}\label{l2'} Let $P=(V,\leq)$ be an $N$-free ordered set and let $P_0,\cdots,P_h$
be the sequence of its levels. Then for every $x\in V$, there exists $i\leq h$ such that all upper covers of $x$ are in $P_i$.
\end{lemma}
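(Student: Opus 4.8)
The plan is to show that the level of any non-minimal element is completely determined by the \emph{set} of its lower covers, and then to combine this with the dual form of Lemma~\ref{l2}. The point is that two upper covers of a fixed $x$ will be forced to share all their lower covers, and hence to sit in the same level.

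First I would record the standard height characterization of the level decomposition: for every $w\in V$, the index $l$ with $w\in P_l$ equals the maximum length of a chain in $P$ whose top element is $w$, with the convention that minimal elements have level $0$. This follows by induction on $l$ directly from the defining formula $P_l=Min(P-\cup\{P_{l'}:l'<l\})$, since an element becomes minimal precisely at the stage when every element strictly below it has already been removed, and the last element removed among those below $w$ sits at the top of a longest chain ascending to $w$. From this I would deduce the key formula: writing $\mathrm{level}(w)$ for the index $l$ with $w\in P_l$, every non-minimal $w$ satisfies
\[
\mathrm{level}(w)=1+\max\{\mathrm{level}(z)\;:\;z\text{ is a lower cover of }w\}.
\]
Indeed each lower cover $z$ of $w$ has $\mathrm{level}(z)\leq \mathrm{level}(w)-1$, because any chain ending at $z$ extends to $w$, while a longest chain to $w$ passes through some lower cover realizing the value $\mathrm{level}(w)-1$. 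The crucial consequence is that $\mathrm{level}(w)$ depends only on the set of lower covers of $w$.

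Finally I would apply this to the upper covers of a fixed $x$. If $u$ and $v$ are two upper covers of $x$, then $x$ is a common lower cover of $u$ and $v$, so by the dual part of Lemma~\ref{l2} the elements $u$ and $v$ have exactly the same lower covers. The displayed formula then forces $\mathrm{level}(u)=\mathrm{level}(v)$. Since this holds for every pair of upper covers of $x$, all upper covers of $x$ lie in a single level $P_i$, which is the assertion of the lemma.

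The main obstacle is really only the bookkeeping needed to establish the height characterization and the level formula cleanly; once these are in place the dual of Lemma~\ref{l2} does all the work and the conclusion is immediate. I expect no genuinely hard step, but some care is needed to phrase the formula entirely in terms of the \emph{set} of lower covers, so that the equal-lower-covers conclusion of Lemma~\ref{l2} transfers directly to equality of levels.
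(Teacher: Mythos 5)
Your proof is correct, but it takes a genuinely different route from the paper. The paper argues by contradiction: it assumes $x$ has upper covers $x_1\in P_j$ and $x_2\in P_k$ with $j<k$, picks a lower cover $x_3\in P_{k-1}$ of $x_2$, and verifies directly that $(x_3,x_2,x,x_1)$ forms an $N$, using $N$-freeness in raw form rather than through Lemma~\ref{l2}. You instead factor the argument into two clean pieces: a poset-general fact (level equals height, so $\mathrm{level}(w)=1+\max\{\mathrm{level}(z): z \text{ a lower cover of } w\}$ for non-minimal $w$, whence the level of a non-minimal element is determined by its \emph{set} of lower covers) and the $N$-free-specific fact, namely the dual half of Lemma~\ref{l2} applied to two upper covers $u,v$ of $x$, which share the common lower cover $x$ and hence have identical lower-cover sets and identical levels. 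Both arguments are sound; yours is more modular and makes explicit the strict monotonicity of levels along comparabilities, a fact the paper's proof uses only implicitly (in the step ruling out $x_3<x_1$ ``since $j\leq k-1$''), while the paper's version is self-contained in that it does not invoke Lemma~\ref{l2} (which the paper states without proof) and instead exhibits the forbidden $N$ concretely. Your sketch of the height characterization is slightly informal, but the fact is standard and the induction you indicate goes through without difficulty; note also that the degenerate cases (zero or one upper cover) are trivially covered, as in the paper.
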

\begin{proof}If $x$ has at most one upper cover, then the conclusion of the lemma holds. So we may assume
that $x$ has at least two distinct upper covers $x_1$ and $x_2$ belonging to two distinct levels. Let $j<k$ be such that
$x_1 \in P_j$ and $x_2 \in P_k$. Then $x_2$ has a lower cover $x_3\in P_{k-1}$. We claim that $(x_3,x_2,x,x_1\}$ is an $N$ in $P$
contradicting our assumption that $P$ is $N$-free. Indeed, since $x_1$ and $x_2$ are upper covers of $x$ we infer that they must be
incomparable.  Moreover, $x_1$ and $x_3$ are incomparable because otherwise $x_1<x_3<x_2$ (notice that $x_3<x_1$ is not
possible since $j\leq k-1$) which contradicts our assumption that $x_2$
is an upper cover of $x$. Similarly we have that $x$ and $x_3$ are incomparable proving our claim. The proof of the lemma is now complete.
\end{proof}

Let $P=(V,\leq)$ be an ordered set. For $x\in V$ define $D(x):=\{y\in V : y< x\}$ and $U(x):=\{y\in V : x<y\}$ .

\begin{lemma}\label{l3} Let $P$ be an $N$-free ordered set and let $P_0,\cdots,P_h$
be the sequence of its levels. Let $0 \leq i\leq h$ be such that $i$ is the largest with the property that $P_i$ contains two distinct elements
with the same set of lower covers. Then for every $x\in P_i$ we have that $U(x)\cup \{x\}$ is a chain.
\end{lemma}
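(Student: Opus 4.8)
The plan is to prove a slightly stronger and purely structural statement: that every element of $P$ lying in a level $P_m$ with $m\geq i$ has \emph{at most one} upper cover. Once this is in hand, the assertion about $x\in P_i$ follows quickly. Indeed, $x$ itself sits in $P_i$, and since the level function is strictly increasing along $<$, every element of $U(x)$ lies in a level strictly above $P_i$, hence in a level $\geq i$. Thus each element of $\{x\}\cup U(x)$ has at most one upper cover, so following upper covers from $x$ traces out a single saturated chain $x\lessdot x_1\lessdot x_2\lessdot\cdots$; checking that any upward saturated chain from $x$ is forced to coincide with this one then shows $U(x)\cup\{x\}$ is totally ordered.

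The heart of the argument, and the step I expect to be the main obstacle, is the claim that no element at level $\geq i$ has two upper covers. This is where the maximality of $i$ must be used, and the delicate point is that the hypothesis on $i$ is phrased in terms of \emph{lower} covers while the property I want concerns \emph{upper} covers; the bridge is supplied by the two preceding lemmas. Suppose for contradiction that some $z\in P_m$ with $m\geq i$ has two distinct upper covers $w_1$ and $w_2$. By Lemma~\ref{l2'} all upper covers of $z$ lie in a single level $P_k$, and since $z<w_1$ we have $k>m\geq i$, so $k>i$. As $w_1$ and $w_2$ share the common lower cover $z$, Lemma~\ref{l2} forces them to have exactly the same set of lower covers. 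Hence $P_k$ contains two distinct elements with the same lower covers and $k>i$, contradicting the maximality of $i$.

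With the structural claim established, I would finish as follows. Fix $x\in P_i$. If $x$ has no upper cover then $U(x)=\emptyset$ and there is nothing to prove, so assume $x$ has a unique upper cover $x_1$. For any $y\in U(x)$ with $y\neq x_1$ we have $x<y$; taking a saturated chain from $x$ to $y$, its first step is an upper cover of $x$ and therefore equals $x_1$, so $x_1<y$, i.e.\ $y\in U(x_1)$. Iterating this up the necessarily unique chain of upper covers, every element of $U(x)$ occurs as one of $x_1<x_2<\cdots$, while conversely each $x_j$ lies in $U(x)$. Thus $U(x)\cup\{x\}$ is exactly the chain $x<x_1<x_2<\cdots$. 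The only care required is to note that, since every element encountered has at most one upper cover, there is no branching above $x$, so no two incomparable elements can arise there.
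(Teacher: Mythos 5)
Your proof is correct and takes essentially the same route as the paper's: both arguments locate a hypothetical element at level $\geq i$ with two distinct upper covers, use Lemma~\ref{l2'} to place those covers in a common level $P_k$ with $k>i$, and use Lemma~\ref{l2} to give them identical lower-cover sets, contradicting the maximality of $i$. The only difference is presentational---you state the no-branching claim explicitly for all levels $\geq i$ and spell out the chain-following step that the paper leaves implicit.
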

\begin{proof}Let $x\in P_i$ be such that $U(x)\neq \emptyset$ and suppose that $U(x)$ is not a chain.
There is then an element $y\in U(x)\cup \{x\}$ having at least two distinct upper covers, say $y_1,y_2$. From
Lemma~\ref{l2'} we deduce that $y_1$ and $y_2$ are in the same level $P_j$ with $i<j$.
Because $P$ is $N$-free it follows from Lemma \ref{l2} that $y_1$ and $y_2$ have
the same set of lower covers. This contradicts our choice of $i$.
\end{proof}

We recall that an incomparable pair $(x,y)$ of elements is {\it critical } if $U(y)\subseteq
U(x)$ and $D(x)\subseteq D(y)$. The following lemma is true for ordered sets that are not necessarily $N$-free.

\begin{lemma} \label{l1} Suppose $(x,y)$ is a critical pair in $P$
and consider any linear extension of $P$ in which $y<x$. Then the
linear order obtained by swapping the positions of $y$ and $x$ is
also a linear extension of $P$. Moreover, $\mathbb{P}(x\prec y) \geq
\frac{1}{2}$.
\end{lemma}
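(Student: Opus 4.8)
The plan is to establish the two assertions in turn, with the first carrying essentially all of the content. Fix a linear extension $L$ of $P$ in which $y<x$, and let $L'$ be the linear order obtained from $L$ by interchanging the positions occupied by $x$ and $y$, leaving every other element fixed. To verify that $L'$ is again a linear extension, I would first determine exactly which ordered pairs have their relative order reversed in passing from $L$ to $L'$, and then check that none of those reversals contradicts a relation of $P$. Writing $<_L$ for the order induced by $L$, the only affected pairs are $\{x,y\}$ itself together with, for each element $z$ satisfying $y<_L z<_L x$, the pairs $\{x,z\}$ and $\{z,y\}$; every other relative order is inherited unchanged from $L$, so no comparability of $P$ among those pairs can be broken.

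The pair $\{x,y\}$ causes no difficulty, since $x$ and $y$ are incomparable in $P$ and so reversing their order cannot contradict $\leq$. The heart of the argument, and the one step I expect to require care, is ruling out a problematic intermediate element $z$ with $y<_L z<_L x$, and this is precisely where the critical-pair hypotheses enter. If $z<x$ in $P$, then $z\in D(x)\subseteq D(y)$, hence $z<y$ in $P$ and therefore $z<_L y$, contradicting $y<_L z$; so no intermediate $z$ satisfies $z<x$, and placing $x$ before such a $z$ in $L'$ is legitimate. Dually, if $y<z$ in $P$, then $z\in U(y)\subseteq U(x)$, hence $x<z$ in $P$ and therefore $x<_L z$, contradicting $z<_L x$; so no intermediate $z$ satisfies $y<z$, and placing such a $z$ before $y$ in $L'$ is legitimate. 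This shows $L'$ respects every comparability of $P$, which proves the first assertion.

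For the inequality $\mathbb{P}(x\prec y)\geq \frac{1}{2}$, I would note that the swapping operation $L\mapsto L'$ is a map from the set of linear extensions placing $y$ before $x$ into the set of those placing $x$ before $y$, and that it is injective, since interchanging the two distinguished positions a second time recovers $L$. Because $x$ and $y$ are incomparable, each linear extension of $P$ places exactly one of them before the other, so $e(P)$ is the number of extensions with $x$ before $y$ plus the number with $y$ before $x$. The injection forces the latter count to be at most the former, and dividing through by $e(P)$ gives $\mathbb{P}(x\prec y)\geq \frac{1}{2}$, completing the proof.
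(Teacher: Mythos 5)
Your proof is correct and takes essentially the same route as the paper: swap $x$ and $y$, use the critical-pair inclusions $D(x)\subseteq D(y)$ and $U(y)\subseteq U(x)$ to show no intermediate element obstructs the swap, and conclude via injectivity of the swap map that $\mathbb{P}(y\prec x)\leq \mathbb{P}(x\prec y)$. You are merely more explicit than the paper on two points it leaves implicit --- identifying exactly which pairs reverse order and justifying injectivity by noting the swap is its own inverse --- which is fine.
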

\begin{proof}Let $L$ be a linear extension that puts $y$ before $x$ and let
$z$ be such that $y\prec z\prec x$ in $L$. Then $z$ is incomparable with both $x$ and $y$ since $(x,y)$ is
a critical pair of $P$. Therefore, the linear order $L'$ obtained by swapping $x$
and $y$ is a linear extension of
$P$. The map $L \mapsto L'$ from the set of linear extensions that
put $y$ before $x$ into the set of linear extensions that put $x$
before $y$ is clearly one-to-one. Hence, $\mathbb{P}(y\prec x) \leq \mathbb{P}(x\prec y)$
and therefore $\mathbb{P}(x\prec y) \geq \frac{1}{2}$.
\end{proof}

We now prove Theorem \ref{nfreeinterval}.


\begin{proof} Let $P=(V,\leq)$ be an $N$-free ordered set not totally ordered and $P_0,\cdots,P_h$
be the sequence of its levels. If $P_0$ is a singleton, say $P_0=\{p_0\}$, then $p_0$ will be the minimum
element in any linear extension of the ordered set. Therefore, nothing will change if $p_0$ is
deleted from the ordered set. So we may assume without loss of generality that $P_0$ has at least two distinct elements.
Notice that any two such elements have the same set of lower covers: the empty set. Now let $0\leq i\leq h$ be such that
$i$ is the largest with the property that $P_i$ contains two distinct elements
with the same set of lower covers and let $a,b\in P_i$ be such elements.
If $U(b)= U(a)=\emptyset$, then $\mathbb{P}(a\prec b)=\frac{1}{2}$ and we are done. Otherwise
we may suppose without loss of generality that $U(b)\neq \emptyset$. From Lemma~\ref{l3} we deduce that $U(b)\cup \{b\}$ is a chain, say
$U(b)\cup \{b\}$ is the chain $b=b_1<\cdots <b_n$. We prove the theorem by contradiction. We may assume without loss of generality that
\[\mathbb{P}(a\prec b_1)<\frac{1}{3}.\]
Indeed, if $U(a)\neq \emptyset$, then the situation is symmetric with respect to
$a$ and $b$ and therefore such an assumption is possible. Otherwise, $U(a)= \emptyset$
and hence $(b_1,a)$ is a critical pair (this is because $D(a)=D(b_1)$ by assumption) yielding
$\mathbb{P}(b_1\prec a)>\frac{2}{3}$ (Lemma~\ref{l1}) or equivalently $\mathbb{P}(a\prec b_1)<\frac{1}{3}$.

Define now the following quantities
\begin{eqnarray*}
q_1 &=& \mathbb{P}(a\prec b_1),\\
q_j &=& \mathbb{P}(b_{j-1}\prec a \prec b_j)(2\leq j\leq n),\\
q_{n+1} &=& \mathbb{P}(b_n\prec a).
\end{eqnarray*}


\noindent\textbf{Lemma.} The real numbers $q_j$ ($1\leq j\leq n+1$) satisfy:
\begin{enumerate}[(i)]
\item $0\leq q_{n+1}\leq \cdots \leq q_1\leq \frac{1}{3},$
\item $\sum_{j=1}^{n+1}q_j=1.$
\end{enumerate}

\begin{proof} Since $q_1,\cdots,q_{n+1}$ is a probability distribution, all we have to show is that
$q_{n+1}\leq \cdots \leq q_1$. To show this we exhibit a one-to-one mapping from the event that $b_j\prec a \prec b_{j+1}$ whose probability
is $q_{j+1}$ into the event that $b_{j-1}\prec a \prec b_{j}$  whose probability is $q_j$ ($1\leq j\leq n$). Notice that in a linear extension for
which $b_j\prec a\prec b_{j+1}$ every element $z$ between $b_j$ and $a$ is incomparable to both $b_j$ and $a$. Indeed, such
an element $z$ cannot be comparable to $b_j$ because otherwise $b_j<z$ in $P$ but the only
element above $b_j$ is $b_{j+1}$ which is above $a$ in the linear extension. Now $z$ cannot be comparable to $a$ as well because otherwise
$z<a$ in $P$ and hence $z<b=b_1<b_j$ (by assumption we have that $D(a)=D(b)$).
The mapping from those linear extensions in which $b_j\prec a\prec b_{j+1}$ to those
in which $b_{j-1}\prec a\prec b_{j}$ is obtained by swapping the positions of $a$ and $b_j$.
This mapping clearly is well defined and one-to-one.
\end{proof}

Theorem~\ref{nfreeinterval} can be proved now: let $r$ be defined by
\[\sum_{j=1}^{r-1}q_j\leq \frac{1}{2}<\sum_{j=1}^{r}q_j\]
Since $\sum_{j=1}^{r-1}q_j=\mathbb{P}(a\prec b_{r-1})\leq \frac{1}{2}$, it follows that $\sum_{j=1}^{r-1}q_j<\frac{1}{3}$.
Similarly $\sum_{j=1}^{r}q_j=\mathbb{P}(a\prec b_{r})$ must be $>\frac{2}{3}$. Therefore $q_r>\frac{1}{3}$, but this contradicts
$\frac{1}{3}>q_1\geq q_r$.
\end{proof}

\end{document}